\newtheorem{theorem}{Theorem}%[section]
\newtheorem*{theorem*}{Theorem}
\newtheorem{definition}[theorem]{Definition}
\newtheorem{proposition}[theorem]{Proposition}
\theoremstyle{definition}
\newtheorem{remark}[theorem]{Remark}
\newcommand{\BB}{\mbb{B}}
\newcommand{\C}{\mbb{C}}
\newcommand{\HH}{\mbb{H}}
\newcommand{\I}{\mc{I}}
\newcommand{\R}{\mbb{R}}
\newcommand{\OO}{\Omega}
\newcommand{\mbb}{\mathbb}
\newcommand{\mc}{\mathcal}
\newcommand{\lra}{\to}
\newcommand{\ui}{\imath}
\newcommand{\dibar}{\overline\partial}
\newcommand{\dbcrf}{\dibar_{\scriptscriptstyle CRF}}
\newcommand{\dcrf}{\partial_{\scriptscriptstyle CRF}}
\newcommand\sd[1]{{#1}'_s}
\def\SS{\mathbb S}
\newcommand\Hc{\HH\otimes_{\R}{\C}}
\newcommand\Sl{\mathcal S}
\newcommand\dd[2]{\dfrac{\partial#1}{\partial#2}} 
\newcommand\ddd[2]{\frac{\partial#1}{\partial#2}}
\newcommand\IM{\operatorname{Im}}
\newcommand\RE{\operatorname{Re}}
\newcommand\ord{\operatorname{ord}}
\newcommand\Reg{\mathcal {R}}
\newcommand\SR{\Sl\Reg}
\newcommand{\bc}{\begin{center}}
\newcommand{\ec}{\end{center}}
\newcommand\vs[1]{{#1}_s^\circ}
\newcommand{\CRF}{\scriptscriptstyle CRF}
\begin{document}
\title{\bf A four dimensional Jensen formula}

\author{A.~Perotti\footnote{Partially supported by GNSAGA of INdAM}
\\
\small Dipartimento di Matematica, Universit\`a di Trento\\ 
\small Via Sommarive 14, I-38123 Povo Trento, Italy\\
\small alessandro.perotti@unitn.it}

\date{  }

\maketitle

\begin{abstract}
We prove a Jensen formula for slice-regular functions of one quaternionic variable. The formula relates the value of the function and of its first two derivatives at a point with its integral mean on a three dimensional sphere centred at that point and with the disposition of its zeros. The formula can be extended to semiregular slice functions.
\\

\textbf{Mathematics Subject Classification (2000).} Primary 30G35; Secondary 31A30.

\textbf{Keywords.} Jensen formula, slice-regular functions.
\end{abstract}

\section{Introduction and preliminaries}

The aim of this note is to prove a Jensen formula for slice-regular functions of one quaternionic variable. We show how the results obtained in \cite{Pe2018} can be applied to extend to any slice-regular function the formula proved by Altavilla and Bisi in \cite{AltavillaBisi} for slice-preserving functions.
The formula relates the value of the function and of its first and second derivatives at a point on the real axis, its integral mean on a three dimensional sphere centred at that point, and the position of its zeros. The formula can be generalized to semiregular slice functions, where also poles enter in the formula.
See \cite[Theorem 5.2]{EntireSRF} for another Jensen-type formula for slice-regular functions, in which the integration is performed on a two-dimensional slice of the domain.

Slice-regular functions constitute a recent function theory in several hypercomplex settings %, including quaternions and real Clifford algebras 
(see \cite{GS, %CSS,
GhPe_AIM, GSS,AlgebraSliceFunctions}). 
This class of functions was introduced by Gentili and Struppa \cite{GS} for functions of one quaternionic variable. Let $\HH$ denote the skew field of quaternions, with basic elements $i,j,k$. For each quaternion $J$ in the sphere 
 \[\SS=\{J\in\HH\ |\ J^2=-1\}=\{x_1i+x_2j+x_3k\in\HH\ |\ x_1^2+x_2^2+x_3^2=1\}\]
of imaginary units, let $\C_J=\langle 1,J\rangle\simeq\C$ be the subalgebra generated by $J$. Then we have the ``slice'' decomposition
\[\HH=\bigcup_{J\in \SS}\C_J, \quad\text{with $\C_J\cap\C_K=\R$\quad for every $J,K\in\SS,\ J\ne\pm K$.}\]

A differentiable function $f:\OO\subseteq\HH\rightarrow\HH$  is called \emph{(left) slice-regular} on $\OO$ if, for each $J\in\SS$, the restriction
\[f_{\,|\OO\cap\C_J}\ : \ \OO\cap\C_J\rightarrow \HH\]
is holomorphic with respect to the complex structure defined by left multiplication by $J$. For example, polynomials $f(x)=\sum_m x^ma_m$ with quaternionic coefficients on the right are slice-regular on $\HH$ and convergent power series are slice-regular on an open ball centered at the origin.

Let $x_0,x_1,x_2,x_3$ denote the real components of a quaternion $x=x_0+x_1i+x_2j+x_3k$.
In the following, we use the *-algebra structure of $\HH$ given by the $\R$-linear antiinvolution $x\mapsto \bar x =x_0-x_1i-x_2j-x_3k$. Let $t(x):=x+\bar x=2\RE(x)$ be the \emph{trace} of $x$ and $n(x):=x\bar x=x_0^2+x_1^2+x_2^2+x_3^2=|x|^2$  the \emph{norm} of $x$. We also set $\IM(x)=(x-\bar x)/2=x_1i+x_2j+x_3k$.

Let $\HH\otimes_{\R}\C$ be the algebra of complex quaternions, with elements $w=a+\ui b$, $a,b\in\HH$, $\ui^2=-1$.
Every quaternionic polynomial $f(x)=\sum_m x^ma_m$ lifts to a unique polynomial function $F:\C\rightarrow\HH\otimes_{\R}\C$  which makes the following diagram commutative for every $J \in \SS$:
\[
\begin{CD}
\C\simeq \R\otimes_\R\C @>F> >\HH\otimes_\R\C\\ % right arrow with labels
@V \Phi_J V  V %down arrow with labels
@V V \Phi_J V\\%down arrow with labels
\HH @>f> >\HH % right arrow with labels
\end{CD} 
\]
where $\Phi_J: \HH\otimes_{\R}\C \to \HH$ is defined by $\Phi_J(a+\ui b):=a+Jb$. The lifted polynomial is simply $F(z)=\sum_m z^ma_m$, with variable $z=\alpha+\ui \beta\in\C$.

In this lifting, the usual product of polynomials with coefficients in $\HH$ on one fixed side (the one obtained by imposing that the indeterminate commutes with the coefficients when two polynomials are multiplied together) corresponds to the pointwise pro\-duct in the algebra $\HH\otimes_{\R}\C$.

The remark made above about quaternionic polynomials suggests a way to define $\HH$-valued functions on a class of open domains in $\HH$.
Let $D\subseteq\C$ be a set that is invariant with respect to complex conjugation. 
In $\Hc$ consider the complex conjugation that maps $w=a+\ui b$ to $\overline w=a-\ui b$ ($a,b\in \HH$).
If a function $F: D \lra \Hc$ satisfies  $F(\overline z)=\overline{F(z)}$ for every $z\in D$, then $F$  is called a \emph{stem function} on $D$. Let $\OO_D$ be the \emph{circular} subset of $\HH$ defined by 
\[\OO_D=\bigcup_{J\in\SS}\Phi_J(D).\]
 The stem function $F=F_1+\ui F_2:D \lra \Hc$  induces the \emph{(left) slice function} $f=\I(F):\OO_D \lra \HH$ in the following way: if $x=\alpha+J\beta =\Phi_J(z)\in \OO_D\cap \C_J$, then  
\[ f(x)=F_1(z)+JF_2(z),\]
where $z=\alpha+\ui\beta$. 

The previous lifting suggests also the definition of the \emph{slice product} of two slice functions $f=\I(F)$ and $g=\I(G)$. It is the slice function $f\cdot g=\I(FG)$ obtained  by means of the product in the algebra $\HH\otimes_{\R}\C$.
We recall the formula that links the slice product to the quaternionic pointwise product: if $f(x)=0$, then $(f\cdot g)(x)=0$, while  for every $x$ such that $f(x)\ne0$ it holds $(f\cdot g)(x)=f(x)g(f(x)^{-1}xf(x))$.

The function $f=\I(F)$ is called \emph{slice-preserving} if $F_1$ and $F_2$ are real-valued (this is the case already considered by Fueter \cite{F} for holomorphic $F$). 
In this case, $f(\overline x)=\overline{f(x)}$ for each $x\in\OO_D$, and the slice product $f\cdot g$ coincides with the pointwise product of $f$ and $g$ for any slice function $g$.

The slice function $f$ is called \emph{circular} if $F_2\equiv0$. In this case, if $x=\alpha+\beta J\in\HH\setminus\R$, $f(y)=f(x)$ for every $y$ in the sphere $\SS_x=\alpha+\beta\SS$.

If the stem function $F$ is holomorphic, the slice function $f=\I(F)$ is called \emph{(left) slice-regular}. We shall denote by $\SR(\OO_D)$ the right $\HH$-module of slice-regular functions on $\OO_D$.
When the domain $D$ intersects the real axis, this definition of slice regularity is equivalent to the one proposed by Gentili and Struppa \cite{GS}.
This approach to slice regularity has been developed on any real alternative *-algebra. See \cite{%CSS,
GhPe_AIM,GhPe_Trends, AlgebraSliceFunctions} for details and other references.

%%-- Numbered sections
\subsection{The slice derivatives and the spherical operators}

The commutative diagrams shown above suggest a natural definition of the \emph{slice derivatives} $\ddd{f}{x},\ddd{f\;}{x^c}$ of a slice functions $f$. They are the slice functions induced, respectively, by the derivatives $\ddd{F}{z}$ and $\ddd{F}{\overline z}$:
\[\dd{f}{x}=\I\left(\dd{F}{z}\right)\quad\text{and}\quad \dd{f\;}{x^c}=\I\left(\dd{F}{\overline z}\right).\]
With this notation a slice function is slice-regular if and only if $\ddd{f\;}{x^c}=0$ and if this is the case also the slice derivative $\ddd{f}{x}$ is slice-regular.
These derivatives satisfy the Leibniz formula for slice product of functions.

We now recall from \cite{GhPe_AIM} two other operators that describe completely slice functions.
Let $f=\I(F)$ be a slice function on $\OO_D$, induced by the stem function $F=F_1+\ui F_2$, with $F_1,F_2:D\subseteq\C\lra \HH$. 
\begin{definition}
The function $\vs f:\OO_D \lra \HH$, called \emph{spherical value} of $f$, and the function $f'_s:\OO_D \setminus \R \lra \HH$, called  \emph{spherical derivative} of $f$, are defined as
\[
\vs f(x):=\frac{1}{2}(f(x)+f(x^c))
\quad \text{and} \quad
f'_s(x):=\frac{1}{2}\IM(x)^{-1}(f(x)-f(x^c)).
\] 
\end{definition}

If $x=\alpha+\beta J\in\OO_D$ and $z=\alpha+\ui\beta\in D$, then $\vs f(x)=F_1(z)$ and $f'_s(x)=\beta^{-1} F_2(z)$. Therefore $\vs f$ and $f'_s$ are slice functions, constant on every set $\SS_x=\alpha+\beta\,\SS$. 
Observe that on $\OO_D\cap\R$, the spherical value of $f$ coincides with $f$. The functions $\vs f$ and $f'_s$
 are slice-regular only if $f$ is locally constant. Moreover, the formula
  \[\label{formula}
  f(x)=\vs f(x)+\IM(x)f'_s(x)
  \]
holds for each $x\in\OO_D\setminus \R$. If $F$ is of class $\mathcal{C}^1$, the formula holds also for $x\in\OO_D\cap\R$. In particular, if $f$ is slice-regular, $\sd{f}$ extends to the real points of $\OO_D$ with the values of the slice derivative $\ddd{f}{x}$.
The zero set $D_f$ of $\sd f$ is called \emph{degenerate set} of $f$ (see \cite[\S7]{GSS} for its properties).
The spherical value and the spherical derivatives satisfy the following Leibniz-type product rule (see \cite[\S5]{GhPe_AIM}):
\begin{equation}\label{Leibniz}
\sd {(f\cdot g)}=\sd f\cdot\vs g+\vs f\cdot\sd g.
\end{equation}

\subsection{Normal function and multiplicities of zeros}\label{subs:normal}
Given a slice function $f=\I(F):\OO_D\to\HH$, with $F=F_1+\ui F_2$, its \emph{conjugate function} $f^c$ and its \emph{normal function} $N(f)$ are the slice functions defined by
\[f^c=\I(F^c)=\I(F_1^c+\ui F_2^c)\text{\quad and\quad}N(f)=f\cdot f^c=f^c\cdot f,
\]
where $F_\mu^c(z)=\overline{F_\mu(z)}$ for $\mu=1,2$, $z\in D$.
The adjective \emph{normal} here is justified by the fact that $N(f)$ can be seen as the norm of $f$ in the *-algebra of slice functions with antiinvolution $f\mapsto f^c$ (in the literature, also the term \emph{symmetrization} is used for the normal function).
Observe that at every real point $a\in\OO_D\cap\R$, $f^c(a)=\overline{f(a)}$ and $N(f)(a)=|f(a)|^2$.
When $f$ is slice-regular, also $f^c$ and $N(f)$ are slice-regular, with $N(f)$ always slice-preserving (we refer to \cite[\S6]{GhPe_AIM} for more details about these functions). 

Let $V(f)=\{x\in\OO_D\,|\, f(x)=0\}$ be the set of zeros of the slice function $f$. We recall some of its basic properties (see \cite{GSS,GhPe_AIM}).  The elements $x\in V(f)$ can be of three types: \emph{real zeros} (when $x\in\R$), \emph{spherical zeros} (when $x\not\in\R$ and $\SS_x\subseteq V(f)$) or \emph{isolated nonreal zeros} (when $\SS_x\not\subseteq V(f)$). For any slice function $f$, it holds
\[V(N(f))=\bigcup_{y\in V(f)}\SS_y.\]
For every $f\in\SR(\OO_D)$, $f\not\equiv0$, the set $V(f)$  consists of isolated points (real or not real) or isolated 2-spheres of the form $\SS_x$ (with nonreal $x$).

\begin{definition}\label{def:nmult}
Let $f\in\SR(\OO_D)$, $f\not\equiv0$.
Let $\Delta_y(x)=N(x-y)$ 
be the \emph{characteristic polynomial} of $y\in\HH$.
Given $y\in V(f)$ and a non-negative integer $s$, we say that $y$ is a zero of $f$ of \emph{total multiplicity} $s$ if $\Delta_y^s$ divides $N(f)$ and $\Delta_y^{s+1}$ does not divide $N(f)$ in $\SR(\OO_D)$. We will denote the integer $s$ by $\widetilde m_f (y)$.
\end{definition}
Note that the total multiplicity is well-defined thanks to \cite[Corollary 23]{GhPe_AIM}. It has the property: $\widetilde m_{N(f)}(y)=2\widetilde m_f(y)$ for every $y\in V(f)$. This can be proved as in \cite[Theorem 26]{GhPe_AIM}, where the argument deals with slice-regular polynomials but it is valid for any slice-regular function (see also \cite[Proposition 6.14]{GSS}). 
A more refined definition of multiplicity for zeros of $f$ can be found in \cite[\S 3.6]{GSS}.

\subsection{Slice-regularity and harmonicity}

In this section we recall some results from \cite{Pe2018} concerning the relation between slice-regularity and harmonicity with respect to the standard Laplacian operator $\Delta_4$ of $\R^4$.
The Cauchy-Riemann-Fueter operator 
\[\dibar_{\CRF} =\dd{}{x_0}+i\dd{}{x_1}+j\dd{}{x_2}+k\dd{}{x_3}\]
 factorizes $\Delta_4$, since it holds:
\[\dcrf\dbcrf=\dbcrf\dcrf=\Delta_4,\]
where
\[\dcrf =\dd{}{x_0}-i\dd{}{x_1}-j\dd{}{x_2}-k\dd{}{x_3}\]
is the conjugated operator.
For any $i,j$ with $1\le i<j\le 3$, let $L_{ij}=x_i\ddd{}{x_j} -x_j\ddd{}{x_i}$ and let
$\Gamma=%-\sum_{i<j}e_ie_jL_{ij}=
-iL_{23}+jL_{13}-kL_{12}$
be the \emph{quaternionic spherical Dirac operator} on $\IM(\HH)$. The operators $L_{ij}$ are tangential differential operators for the spheres $\SS_x=\alpha+\beta\,\SS$ ($\beta>0$) and the  operator $\Gamma$ factorizes the Laplace-Beltrami operator of the 2-sphere.

\begin{proposition}(\cite[Proposition 6.1]{Pe2018})\label{propH}
Let $\OO=\OO_D$ be an open circular domain in $\HH$. For every slice function $f:\OO\lra\HH$, of class $\mathcal{C}^1(\OO)$, the following formulas hold on $\OO\setminus\R$:
\begin{itemize}\setlength\itemsep{0.5em}
\item[(a)]
$\Gamma f=2\IM(x)\sd{f}$.
\item[(b)]
$\dbcrf f-2\ddd{f}{x^c}=-2f'_s$.
\end{itemize}
\end{proposition}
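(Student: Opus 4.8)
The plan is to reduce both identities to the pointwise decomposition $f(x)=\vs f(x)+\IM(x)\,\sd f(x)$ recalled above, exploiting that $\vs f$ and $\sd f$ are \emph{circular} slice functions: if $x=\alpha+\beta J$ with $\alpha=x_0$, $\beta=|\IM(x)|$, $J\in\SS$, they depend only on $(\alpha,\beta)$, and in fact $\vs f(x)=F_1(z)$, $\sd f(x)=\beta^{-1}F_2(z)$ with $z=\alpha+\ui\beta$. I work pointwise on $\OO\setminus\R$, where $\vs f,\sd f$ are $\mathcal{C}^1$ in $(\alpha,\beta)$, and I write $u:=\IM(x)=\sum_{l=1}^{3}x_le_l$ with $e_1=i,\ e_2=j,\ e_3=k$.

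For (a): each $L_{ij}=x_i\partial_{x_j}-x_j\partial_{x_i}$ ($1\le i<j\le 3$) annihilates $x_0=\alpha$ and $x_1^2+x_2^2+x_3^2=\beta^2$, hence annihilates every circular function, so $L_{ij}\vs f=0$ and $L_{ij}\sd f=0$. Since each $L_{ij}$ is a derivation and $\sd f$ sits to the right of $u$, applying $\Gamma=-iL_{23}+jL_{13}-kL_{12}$ to $f=\vs f+u\,\sd f$ collapses to $\Gamma f=(\Gamma u)\,\sd f$. One then computes $L_{23}u=x_2k-x_3j$, $L_{13}u=x_1k-x_3i$, $L_{12}u=x_1j-x_2i$, and substitution into $\Gamma$ together with $ij=k,\ jk=i,\ ki=j$ gives $\Gamma u=2u$. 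Hence $\Gamma f=2\,\IM(x)\,\sd f$.

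For (b): I first record the action of $\dbcrf=\partial_{x_0}+\sum_{l=1}^{3}e_l\partial_{x_l}$ on a circular function $p=p(\alpha,\beta)$: from $\partial_{x_0}p=\partial_\alpha p$ and $\partial_{x_l}p=(x_l/\beta)\partial_\beta p$ one gets $\dbcrf p=\partial_\alpha p+\beta^{-1}u\,\partial_\beta p$. Applying ordinary Leibniz to $\dbcrf(u\,\sd f)$ and using $\partial_{x_l}u=e_l$, $\sum_l e_l^2=-3$, $\sum_l x_le_l=u$ and $u^2=-\beta^2$ gives $\dbcrf(u\,\sd f)=u\,\partial_\alpha(\sd f)-3\,\sd f-\beta\,\partial_\beta(\sd f)$. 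Adding $\dbcrf(\vs f)=\partial_\alpha F_1+\beta^{-1}u\,\partial_\beta F_1$ and substituting $\vs f=F_1$, $\sd f=\beta^{-1}F_2$ — so that $\partial_\beta(\beta^{-1})=-\beta^{-2}$ converts the $-3$ into $-2$ — yields $\dbcrf f=\partial_\alpha F_1-\partial_\beta F_2+\beta^{-1}u(\partial_\beta F_1+\partial_\alpha F_2)-2\beta^{-1}F_2$. Meanwhile, expanding $\ddd{F}{\bar z}=\frac12(\partial_\alpha F+\ui\,\partial_\beta F)$ with $F=F_1+\ui F_2$ and applying $\I(\cdot)$ gives $2\,\ddd{f}{x^c}=(\partial_\alpha F_1-\partial_\beta F_2)+\beta^{-1}u(\partial_\alpha F_2+\partial_\beta F_1)$. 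Subtracting the two, everything cancels except $-2\beta^{-1}F_2=-2\,\sd f$, which is (b).

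I expect the only real difficulty to be the noncommutative bookkeeping in (b): keeping $u=\IM(x)$ on the correct side of the quaternionic coefficients throughout, and checking that the scalar contributions $\sum_l e_l^2=-3$, $u^2=-\beta^2$ and $\partial_\beta(\beta^{-1})=-\beta^{-2}$ combine to produce precisely the constants $2$ in (a) and $-2$ in (b). This cancellation, not any conceptual point, is what will need care.
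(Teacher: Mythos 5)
Your computation is correct: both identities do follow from the decomposition $f=\vs f+\IM(x)\sd f$ on $\OO\setminus\R$, the key points being that the tangential operators $L_{ij}$ kill circular functions, that $\Gamma u=2u$ for $u=\IM(x)$ (your products $-i(x_2k-x_3j)+j(x_1k-x_3i)-k(x_1j-x_2i)=2u$ check out), and that in (b) the contributions $\sum_l e_l^2=-3$, $u^2=-\beta^2$ and $\partial_\beta(\beta^{-1})=-\beta^{-2}$ combine to leave exactly $-2\beta^{-1}F_2=-2\sd f$ after subtracting $2\,\partial f/\partial x^c=\I\bigl(\partial_\alpha F_1-\partial_\beta F_2+\ui(\partial_\alpha F_2+\partial_\beta F_1)\bigr)$. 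Note that the paper itself gives no proof of this proposition --- it is quoted from \cite[Proposition 6.1]{Pe2018} --- and your direct verification via the spherical value/derivative decomposition is precisely the standard argument used there, so there is nothing to add beyond confirming that the noncommutative bookkeeping (keeping $u$ to the left of $\sd f$ and the unit quaternions to the left in $\Gamma$) is handled correctly.
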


\begin{proposition}(\cite[Corollary 6.2]{Pe2018})\label{pro:H}
Let $\OO=\OO_D$ be an open circular domain in $\HH$. Let  $f:\OO\lra\HH$ be a slice function of class $\mathcal{C}^1(\OO)$. Then
\begin{itemize}\setlength\itemsep{0.1em}
\item[(a)]
$f$ is slice-regular if and only if $\dbcrf f=-2f'_s$.
\item[(b)]
% $f$ is slice-regular and Fueter-regular (i.e.\ it belongs to the kernel of $\dbcrf$) if and only if $f$ is (locally) constant.
% % su ogni \emph{sfera} $\alpha+\SS^{n}\beta\subset \OO$ 
% \item[(c)]
$%\dcrf f-\dif f=
\dcrf f-2\ddd{f}{x}=2f'_s$ and
$%\dif f'_s=
2\ddd{\sd f}{x}=\dcrf f'_s$.
\end{itemize}
\end{proposition}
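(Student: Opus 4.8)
The plan is to deduce both items from Proposition~\ref{propH}, which we may assume.

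For part~(a), I would start from the fact that, by definition, $f$ is slice-regular exactly when $\ddd{f}{x^c}=0$ on $\OO$, and rewrite Proposition~\ref{propH}(b) in the form $\dbcrf f=2\,\ddd{f}{x^c}-2f'_s$ on $\OO\setminus\R$. If $f$ is slice-regular, the first summand disappears and $\dbcrf f=-2f'_s$ on $\OO\setminus\R$; since $F$ is of class $\mathcal{C}^1$, the representation $f=\vs f+\IM(x)f'_s$ shows that $f'_s$ extends continuously to the real points (with value $\ddd{f}{x}$ there), so, $\dbcrf f$ being continuous, the identity propagates to all of $\OO$. Conversely, if $\dbcrf f=-2f'_s$ on $\OO\setminus\R$, comparing with Proposition~\ref{propH}(b) forces $\ddd{f}{x^c}=0$ on $\OO\setminus\R$; equivalently the stem function satisfies $\ddd{F}{\overline z}=0$ on $D\setminus\R$, and as $D$ is open and $D\setminus\R$ is dense in it, $\ddd{F}{\overline z}=0$ on $D$, i.e. $f\in\SR(\OO)$.

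For the first identity in part~(b), the only elementary ingredient I need is the decomposition
\[\ddd{f}{x_0}=\ddd{f}{x}+\ddd{f}{x^c},\]
valid for every $\mathcal{C}^1$ slice function $f=\I(F)$: restricting to a slice $\OO\cap\C_J$ and pulling back along $\Phi_J$ turns $\partial/\partial x_0$ into $\partial/\partial\alpha=\partial/\partial z+\partial/\partial\overline z$, and $\I$ is $\R$-linear. Combining this with the operator identity $\dcrf=2\,\partial/\partial x_0-\dbcrf$ (immediate from the definitions) and with Proposition~\ref{propH}(b), I get, on $\OO\setminus\R$,
\[\dcrf f=2\,\ddd{f}{x_0}-\dbcrf f=2\,\ddd{f}{x}+2\,\ddd{f}{x^c}-\bigl(2\,\ddd{f}{x^c}-2f'_s\bigr)=2\,\ddd{f}{x}+2f'_s,\]
which is the asserted formula; it extends to all of $\OO$ by continuity, exactly as in part~(a).

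For the second identity in part~(b), I would feed $f'_s$ into the first identity of~(b) just proved, which — as derived above — holds for any $\mathcal{C}^1$ slice function. Indeed $f'_s$ is itself a slice function, constant on every sphere $\SS_x$ and hence circular, so that $(f'_s)'_s\equiv0$; and since $F$ is of class $\mathcal{C}^1$, $f'_s$ is of class $\mathcal{C}^1$ on the circular domain $\OO\setminus\R$, on which $x\mapsto x^c=\overline x$ is smooth and $\IM(x)^{-1}$ is defined. Applying the first identity of~(b) to $g=f'_s$ then gives $\dcrf f'_s-2\,\ddd{\sd f}{x}=2(f'_s)'_s=0$, i.e. $2\,\ddd{\sd f}{x}=\dcrf f'_s$; when $f$ is slice-regular, $f'_s$ extends $\mathcal{C}^1$-smoothly across $\R$ and the identity holds on all of $\OO$. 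I do not expect a genuine obstacle here: the analytic substance is already contained in Proposition~\ref{propH}, and the only points needing care are the bookkeeping ones above — extending the identities from $\OO\setminus\R$ to the real points using the $\mathcal{C}^1$ hypothesis and the representation $f=\vs f+\IM(x)f'_s$, and, in the converse half of~(a), upgrading vanishing on $\OO\setminus\R$ to vanishing on $\OO$ via density.
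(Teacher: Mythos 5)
Your proof is correct and takes essentially the intended route: the paper itself does not prove this statement but quotes it from \cite{Pe2018}, where it appears exactly as a corollary of Proposition \ref{propH}, and your derivation — cancelling against Proposition \ref{propH}(b) to get the equivalence in (a) via $\ddd{f\;}{x^c}=0$, combining that identity with $\dcrf+\dbcrf=2\,\partial/\partial x_0$ and $\partial f/\partial x_0=\ddd{f}{x}+\ddd{f\;}{x^c}$ for the first formula in (b), and then applying that formula to the circular slice function $\sd f$ (whose spherical derivative vanishes) for the second — is precisely that argument. The continuity/density bookkeeping you add for extending identities from $\OO\setminus\R$ to $\OO$ and from $D\setminus\R$ to $D$ is sound and complete.
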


\begin{proposition}(\cite[Theorem 6.3]{Pe2018})\label{teo:sr}
Let $\OO=\OO_D$ be an open circular domain in $\HH$. 
If $f:\OO\lra\HH$ is slice-regular, then it holds:
\begin{itemize}\setlength\itemsep{0.1em}
\item[(a)]
  The spherical derivative $f'_s$ is harmonic on $\OO$ (i.e.\ its four real components are harmonic).
\item[(b)]
The following generalization of Fueter's Theorem holds: $\dbcrf\Delta_4f=0$.
As a consequence, every slice-regular function is biharmonic.
% \item[(c)]
% $\Delta_4 f=-4\,\dd{\sd f}{x}$. In particular, $\dd{\sd f}{x}$ is harmonic on $\OO$.
\end{itemize}
\end{proposition}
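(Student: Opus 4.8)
The plan is to prove part (a) by a direct evaluation of $\Delta_4 f'_s$, using that $f'_s$ is a circular slice function, and then to obtain part (b) from (a) together with the factorization $\Delta_4=\dcrf\dbcrf=\dbcrf\dcrf$ and Proposition~\ref{pro:H}(a). Write $f=\I(F)$ with $F=F_1+\ui F_2$ holomorphic. The Cauchy--Riemann equations $\partial_\alpha F_1=\partial_\beta F_2$, $\partial_\beta F_1=-\partial_\alpha F_2$ hold for the $\HH$-valued functions $F_1,F_2$, hence componentwise; differentiating them once more shows that every real component of $F_2$ is harmonic in $(\alpha,\beta)$, i.e.\ $\Delta_2 F_2:=(\partial_\alpha^2+\partial_\beta^2)F_2=0$.

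Now $f'_s$ is circular: for $x=\alpha+J\beta$ with $\beta=|\IM(x)|>0$ we have $f'_s(x)=\beta^{-1}F_2(\alpha+\ui\beta)$, and this does not involve $J$, so $f'_s$ depends only on $x_0=\alpha$ and $\rho:=|\IM(x)|=\beta$. Splitting $\Delta_4=\partial_{x_0}^2+\Delta_3$ with $\Delta_3$ the Laplacian in $(x_1,x_2,x_3)$, on a function of $\rho$ alone $\Delta_3$ becomes the radial operator $\partial_\rho^2+\tfrac2\rho\partial_\rho=\rho^{-1}\partial_\rho^2(\rho\,\cdot\,)$. Since $\rho\, f'_s=F_2$, this gives, on $\OO\setminus\R$,
\[
\Delta_4 f'_s=\partial_{x_0}^2\big(\rho^{-1}F_2\big)+\rho^{-1}\partial_\rho^2F_2=\rho^{-1}\big(\partial_{x_0}^2+\partial_\rho^2\big)F_2=\rho^{-1}\Delta_2 F_2=0.
\]
Since $f$ is slice-regular, $f'_s$ extends smoothly to all of $\OO$ (taking the values of $\ddd{f}{x}$ on $\OO\cap\R$); being of class $\mathcal C^2$ on $\OO$ and harmonic on the dense open subset $\OO\setminus\R$, it is harmonic on $\OO$. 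This proves (a).

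For (b), Proposition~\ref{pro:H}(a) gives $\dbcrf f=-2f'_s$. Applying $\dcrf$ and using $\dcrf\dbcrf=\Delta_4$ yields $\Delta_4 f=-2\,\dcrf f'_s$; applying $\dbcrf$ and using $\dbcrf\dcrf=\Delta_4$ then gives $\dbcrf\Delta_4 f=-2\,\dbcrf\dcrf f'_s=-2\,\Delta_4 f'_s=0$ by part (a), which is the stated generalization of Fueter's theorem. Biharmonicity follows at once: applying $\dcrf$ to $\dbcrf\Delta_4 f=0$ and using $\dcrf\dbcrf=\Delta_4$ gives $\Delta_4(\Delta_4 f)=0$, i.e.\ $\Delta_4^2 f=0$ (equivalently $\Delta_4^2 f=\Delta_4(-2\,\dcrf f'_s)=-2\,\dcrf\Delta_4 f'_s=0$, again by (a)).

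The only non-formal point is the identity $\Delta_4 f'_s=\rho^{-1}\Delta_2 F_2$ in (a): the idea is that $f'_s$, being constant on each sphere $\SS_x$, is an axially symmetric function on $\R^4\simeq\HH$, so conjugating by $\rho$ collapses its four-dimensional Laplacian to the two-dimensional Laplacian of the stem component $F_2$, which is zero because $F$ is holomorphic. An alternative that stays within the operator calculus is to write $\Delta_4 f'_s=\dbcrf\dcrf f'_s=2\,\dbcrf\,\ddd{f'_s}{x}$ by Proposition~\ref{pro:H}(b) (using $(f'_s)'_s=0$, valid since $f'_s$ is circular) and then to expand $\dbcrf\,\ddd{f'_s}{x}$ via Proposition~\ref{propH}(b); this uses the same harmonicity input and is no shorter.
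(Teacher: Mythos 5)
Your proposal is correct. Note that the paper itself does not prove Proposition~\ref{teo:sr}: it is recalled verbatim from \cite[Theorem 6.3]{Pe2018}, so there is no internal proof to match against; what you have written is a legitimate self-contained argument. For part (a), your reduction is sound: $f'_s$ is circular, hence a function of $(x_0,\rho)$ only with $\rho\,f'_s=F_2$, the radial form $\Delta_3=\rho^{-1}\partial_\rho^2(\rho\,\cdot\,)$ collapses $\Delta_4 f'_s$ to $\rho^{-1}\Delta_2F_2$, and $\Delta_2F_2=0$ follows from the Cauchy--Riemann equations for the holomorphic stem $F=F_1+\ui F_2$. The one step you assert rather than justify is that the extension of $f'_s$ across $\OO\cap\R$ is of class $\mathcal{C}^2$; this is true and easy to supply: since $F(\bar z)=\overline{F(z)}$, the component $F_2$ is odd in $\beta$ and real-analytic, so $F_2(\alpha,\beta)=\beta\,G(\alpha,\beta^2)$ with $G$ real-analytic, whence $f'_s(x)=G(x_0,|\IM(x)|^2)$ is real-analytic on all of $\OO$, and continuity of $\Delta_4 f'_s$ plus vanishing on the dense set $\OO\setminus\R$ finishes the argument as you say. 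Part (b) is exactly the operator-calculus route the paper's toolkit suggests: from Proposition~\ref{pro:H}(a), $\dbcrf f=-2f'_s$, so $\dbcrf\Delta_4 f=\dbcrf\dcrf\dbcrf f=-2\Delta_4 f'_s=0$ by (a), and one more application of $\dcrf$ gives $\Delta_4^2f=0$. Your closing alternative via Propositions~\ref{propH}(b) and~\ref{pro:H}(b) is also viable but, as you note, ultimately rests on the same harmonicity input.
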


\section{A four dimensional Jensen formula}

In order to obtain the quaternionic version of Jensen formula, we need three preliminary results.
Before giving the statements, we clarify what we mean in the following by $\log|g|$, for a slice-preserving function $g=\I(G)=\I(G_1+\ui G_2)$ defined on $\OO$.
Since $G_1$ and $G_2$ are real-valued, the function $\overline g$ induced by the stem function $G_1-\ui G_2$ satisfies $\overline g(x)=\overline{g(x)}$ for every $x\in\OO$. 
Note that $(\overline g)'_s=-g'_s$, a property we will use later.
The function $|g|$ induced by the real-valued stem function $(G_1^2+G_2^2)^{1/2}$ satisfies $|g|(x)=|g(x)|$ for all $x\in\OO$. Moreover, $g\cdot \overline g=|g|^2$. Finally, the function $\log|g|=\I\left(\frac12 \log(G_1^2+G_2^2)\right)$ is a circular, slice-preserving function on $\OO\setminus V(g)$, satisfying $(\log|g|)(x)=\log|g(x)|$ for every $x\in\OO\setminus V(g)$.

The first result we need was proved in \cite[Theorem 2.1]{AltavillaBisi} using results from \cite{Pe2018}. For completeness we give a proof here. 

\begin{proposition}\label{pro:bih}
Let $\OO=\OO_D$ be an open circular domain in $\HH$. 
If $g:\OO\to\HH$ is slice-regular and slice-preserving, then $\ddd{}{x}\log|g|$ is slice-regular and $\log|g|$ is biharmonic 
on $\OO\setminus V(g)$. In particular, this is true when $g=N(f)$ for any slice-regular function $f:\OO\to\HH$.
% the function $\ddd{}{x}\log|N(f)|$ is slice-regular on $\OO\setminus V(N(f))$. As a consequence, $\log|N(f)|$ is biharmonic on $\OO\setminus V(N(f))$ for each $f\in\SR(\OO)$. 
\end{proposition}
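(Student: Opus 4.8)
The plan is to work slice-by-slice, reducing the quaternionic statement to the classical complex-variable fact that $\log|h|$ is harmonic away from the zeros of a holomorphic function $h$. Write $g=\I(G)$ with $G=G_1+\ui G_2$ holomorphic and $G_1,G_2$ real-valued. Fix $J\in\SS$ and consider the restriction $g_J=g|_{\OO\cap\C_J}$: this is a $\C_J$-valued holomorphic function of one complex variable, so on $(\OO\cap\C_J)\setminus V(g)$ the function $\log|g_J|$ is harmonic in the two real variables $(\alpha,\beta)$, and moreover $\partial(\log|g|)/\partial x$ restricted to $\C_J$ is the complex derivative of a holomorphic primitive of $(g_J)'/g_J$, hence holomorphic on that slice. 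Since $\log|g|$ is by construction a circular slice-preserving function, its slice derivative is again slice; a slice function whose every restriction to $\C_J$ is holomorphic is slice-regular. This gives the first assertion, that $\ddd{}{x}\log|g|$ is slice-regular on $\OO\setminus V(g)$.

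Next I would pass from slice-regularity of $\partial u/\partial x$ (where $u=\log|g|$) to biharmonicity of $u$ itself. The cleanest route is via the earlier results: by Proposition~\ref{pro:H}(b), for any $\mathcal C^1$ slice function $h$ one has $\dcrf h-2\ddd{h}{x}=2h'_s$. Apply this with $h=u$. Because $u$ is circular and slice-preserving, $u'_s$ has a simple explicit form; in fact from $u(x)=\vs u(x)+\IM(x)u'_s(x)$ and the circularity of $u$ one reads off $u'_s\equiv 0$ (a circular function equals its own spherical value), so $\dcrf u=2\ddd{u}{x}$. Apply $\dbcrf$ to both sides: $\Delta_4 u=\dbcrf\dcrf u=2\,\dbcrf\big(\ddd{u}{x}\big)$. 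But $\ddd{u}{x}$ is slice-regular by the first part, so Proposition~\ref{pro:H}(a) gives $\dbcrf\big(\ddd{u}{x}\big)=-2\,(\partial u/\partial x)'_s$, whence $\Delta_4 u=-4\,(\partial u/\partial x)'_s$. By Proposition~\ref{teo:sr}(a) the spherical derivative of a slice-regular function is harmonic, so $\Delta_4 u$ is harmonic, i.e. $u=\log|g|$ is biharmonic on $\OO\setminus V(g)$.

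Finally, the last sentence of the statement is immediate: for any slice-regular $f:\OO\to\HH$ the normal function $N(f)=f\cdot f^c$ is slice-regular and slice-preserving, as recalled in \S\ref{subs:normal}, so the proposition applies with $g=N(f)$, and on $\OO\setminus V(N(f))$ (which equals $\OO$ minus the union of the zero-spheres of $f$) both conclusions hold.

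The main obstacle I anticipate is bookkeeping at the real axis: $\log|g|$ and its derivatives are only defined on $\OO\setminus V(g)$, and the formulas in Propositions~\ref{pro:H} and~\ref{teo:sr} are stated on $\OO\setminus\R$ or require $\mathcal C^1$ up to $\R$. One should either simply state the conclusion on $\OO\setminus\R$ and then note that biharmonicity is a local property that extends across real points where $g$ does not vanish (since there $\log|g|$ is genuinely $\mathcal C^\infty$), or invoke Proposition~\ref{teo:sr}(b) directly to get $\dbcrf\Delta_4 u=0$ on the real points as well. The verification that $u'_s\equiv 0$ for a circular function, and that "every restriction holomorphic $\Rightarrow$ slice-regular" applies to the $\C_J$-valued (not $\HH$-valued) situation here, are the two small points to check carefully but are routine.
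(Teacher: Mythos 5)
Your proposal is correct and follows essentially the same route as the paper: slice-preservation reduces the first claim to the classical fact that $\ddd{}{z}\log|G|$ is holomorphic for a holomorphic complex function $G$, and then the vanishing of the spherical derivative of the circular function $\log|g|$ together with Proposition~\ref{pro:H}(b) gives $\ddd{}{x}\log|g|=\frac12\dcrf\log|g|$. The only (inessential) difference is the last step: the paper applies Proposition~\ref{teo:sr}(b) ($\dbcrf\Delta_4$ annihilates slice-regular functions) to $\ddd{}{x}\log|g|$, while you combine Proposition~\ref{pro:H}(a) with Proposition~\ref{teo:sr}(a) to get $\Delta_4\log|g|=-4\bigl(\ddd{}{x}\log|g|\bigr)'_s$, which is harmonic — an equivalent use of the same circle of results.
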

\begin{proof}
Let $g=\I(G)=\I(G_1+\ui G_2)\in\SR(\OO)$ be slice-preserving. Let $\Delta_2$ be the two-dimensional Laplacian. Since $G:D\to \C$ is holomorphic, $\Delta_2\log|G|=0$ where $G$ does not vanish. Therefore $\ddd{}z \log|G|$ is holomorphic on $D\setminus V(G)$, and $\ddd{}{x}\log|g|=\I(\ddd{}z \log|G|)$ is slice-regular. 
Since $\log|g|$ is a circular slice function, its spherical derivative vanishes and then from point (b) of Proposition \ref{pro:H}
\[\textstyle\ddd{}{x}\log|g|=\dfrac12\dcrf\log|g|.\]
From point (b) of Proposition \ref{teo:sr} we get 
\[\textstyle
0=\dbcrf\Delta_4\left(\ddd{}x \log|g|\right)=\frac12\dbcrf\Delta_4\dcrf\log|g|=\frac12\Delta_4^2\log|g|,\]
i.e.\ $\log|g|$ is biharmonic.
\end{proof}

\begin{remark}
If $g:\OO\to\HH$ is slice-regular but not slice-preserving, then the function $\log|g|$, mapping $x$ to $\log|g(x)|$, can be not biharmonic. This can happen also if $g$ is one-slice-preserving (see \cite[Remark 2.8]{AltavillaBisi}). We recall that a slice function $f$ is \emph{one-slice-preserving} if there exists $J\in\SS$ such that $f(\OO\cap\C_J)\subseteq\C_J$.

\end{remark}

\begin{proposition}\label{deltalog}
Let $\OO$ be an open circular domain in $\HH$ with $\OO\ni0$. If $f:\OO\to\HH$ is slice-regular and $f(0)\ne0$, then
\[
\Delta_4\log|N(f)|_{x=0}=-4\RE\left(f(0)^{-1}\frac{\partial^2 f}{\partial x^2}(0)\right)+4\RE\left(\left(f(0)^{-1}\overline{\dd {f}{x}(0)}\right)^2\right).
\]
\end{proposition}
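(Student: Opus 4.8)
The plan is to turn the fourth-order quantity $\Delta_4\log|N(f)|$ into a first-order (spherical) derivative of a slice-regular function, evaluate it at the real point $0$, and then unwind everything through the stem function of $N(f)$ and the Leibniz rule for the slice product.

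Put $u=\log|N(f)|$ and $g=N(f)=\I(G)$, working on a circular neighbourhood of $0$ on which $g$ does not vanish (it exists since $g(0)=|f(0)|^2\ne0$). Here $G$ is holomorphic and $\C$-valued because $g$ is slice-preserving, and $u$ is a circular slice function, so $u'_s\equiv0$; moreover, by Proposition \ref{pro:bih}, $v:=\ddd{}{x}u$ is slice-regular. I would first apply Proposition \ref{pro:H}(b) to the $C^1$ slice function $u$, getting $\dcrf u=2\,\ddd{}{x}u=2v$ (as $u'_s=0$), and then Proposition \ref{pro:H}(a) to the slice-regular $v$, getting $\dbcrf v=-2v'_s$. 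Since $\Delta_4=\dbcrf\dcrf$, this yields
\[
\Delta_4 u=\dbcrf(\dcrf u)=2\,\dbcrf v=-4\,v'_s=-4\Big(\ddd{}{x}\log|N(f)|\Big)'_s .
\]
As $v$ is slice-regular, $v'_s$ extends to the real points with the value of $\ddd{v}{x}=\ddd{^2}{x^2}\log|N(f)|$, so evaluating at $x=0$ gives $\Delta_4\log|N(f)|_{x=0}=-4\,\ddd{^2}{x^2}\log|N(f)|(0)$. (Applying $\Delta_4$ again and using that $v'_s$ is harmonic, by Proposition \ref{teo:sr}(a), recovers the biharmonicity of $u$ from Proposition \ref{pro:bih}; one could also reach this intermediate identity by hand, writing $\Delta_4$ in cylindrical coordinates, using that $\log|G|$ is harmonic on each slice and even in the $\beta$-variable, and trading $\partial_\beta^2$ for $-\partial_{x_0}^2$.)

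Next I would compute the right-hand side via $G$. Near $z=0$, since $\partial_{\bar z}G=0$ and $G\overline{G}=|G|^2$, one has $\ddd{}{z}\log|G|=\tfrac12\,\partial_z\log(G\overline{G})=\tfrac12\,G'/G$, and applying $\I$ twice,
\[
\ddd{^2}{x^2}\log|N(f)|=\I\!\left(\tfrac12\Big(\tfrac{G'}{G}\Big)'\right)=\I\!\left(\tfrac12\Big(\tfrac{G''}{G}-\tfrac{(G')^2}{G^2}\Big)\right).
\]
At the real point $0$ the operator $\I$ is just evaluation of the stem function, and $G^{(k)}(0)=\ddd{^kN(f)}{x^k}(0)$ for $k=1,2$, so
\[
\Delta_4\log|N(f)|_{x=0}=-2\,\frac{\ddd{^2N(f)}{x^2}(0)}{N(f)(0)}+2\,\frac{\big(\ddd{N(f)}{x}(0)\big)^2}{N(f)(0)^2}.
\]

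Finally I would pass from $N(f)$ back to $f$ through $N(f)=f\cdot f^c$, the Leibniz rule for $\ddd{}{x}$ on slice products, the identity $\ddd{}{x}(f^c)=(\ddd{}{x}f)^c$ (immediate from the definitions), and the fact that at real points $f^c=\overline{f}$ and $\ddd{}{x}f^c=\overline{\ddd{}{x}f}$. Since on $\R$ the slice product is the pointwise product, evaluation at $0$ gives $N(f)(0)=f(0)\overline{f(0)}=|f(0)|^2$, $\ddd{N(f)}{x}(0)=2\RE\!\big(\ddd{f}{x}(0)\,\overline{f(0)}\big)$, and $\ddd{^2N(f)}{x^2}(0)=2\RE\!\big(\ddd{^2f}{x^2}(0)\,\overline{f(0)}\big)+2|\ddd{f}{x}(0)|^2$. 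Substituting into the previous display and simplifying with the quaternionic identities $\RE(ab)=\RE(ba)$, $a^{-1}=\overline{a}/n(a)$, and $\RE(q^2)=2\RE(q)^2-n(q)$ — the last one used to absorb the leftover term $-4|\ddd{f}{x}(0)|^2/|f(0)|^2$ into a square — collapses the expression to the asserted formula. The only genuine labour is this closing round of quaternionic bookkeeping; the points needing the most care are the passage from the identities valid on the punctured neighbourhood to the value at the real point $0$ (via the continuous extension of $v'_s$) and the sign tracking in the final simplification.
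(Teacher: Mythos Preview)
Your reduction $\Delta_4 u=-4\,(\partial_x u)'_s$ via Propositions~\ref{pro:H} and~\ref{pro:bih}, and hence $\Delta_4\log|N(f)|_{x=0}=-4\,\partial_x^2\log|N(f)|(0)$ at the real point, is precisely the paper's opening move. Where you diverge is afterwards: the paper stays in the quaternionic variable, expanding $\partial_s\!\big(\partial_x g\cdot\bar g\big)$ with $g=N(f)=f\cdot f^c$ through the spherical Leibniz rule~\eqref{Leibniz} and then evaluating four separate pieces at $0$; you instead pass to the holomorphic stem $G$, use $\partial_z\log|G|=\tfrac12\,G'/G$, and land in one line on
\[
\Delta_4\log|N(f)|_{x=0}=-2\,\frac{N(f)''(0)}{N(f)(0)}+2\,\frac{\big(N(f)'(0)\big)^2}{N(f)(0)^2}.
\]
This is cleaner, and after your Leibniz expansion of $N(f)^{(k)}(0)$ both routes meet at the same penultimate expression
\[
-\frac{4\RE\!\big(\overline{f(0)}\,f''(0)\big)}{|f(0)|^2}-\frac{4\,|f'(0)|^2}{|f(0)|^2}+\frac{8\big(\RE(f'(0)\overline{f(0)})\big)^2}{|f(0)|^4}.
\]

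The one genuine issue is your last sentence. Applying $\RE(q^2)=2\RE(q)^2-n(q)$ to the last two terms forces $\RE(q)=\RE\!\big(f'(0)\overline{f(0)}\big)/|f(0)|^2$, which is the real part of $q=f'(0)f(0)^{-1}$, \emph{not} of $f(0)^{-1}\overline{f'(0)}$; for $a=f(0)^{-1}\overline{f'(0)}$ one gets $\RE(a)=\RE\!\big(f'(0)f(0)\big)/|f(0)|^2$ instead. These differ: take $f(x)=(1+x)(1+i)$, so $f(0)=f'(0)=1+i$, $f''(0)=0$, $N(f)=2(1+x)^2$; then $\Delta_4\log|N(f)|_{x=0}=2\,\Delta_4\log|1+x|\big|_{0}=4$, which equals $4\RE\!\big((f'(0)f(0)^{-1})^2\big)=4$ but \emph{not} $4\RE\!\big((f(0)^{-1}\overline{f'(0)})^2\big)=4\RE((-i)^2)=-4$. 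So the ``bookkeeping'' does not collapse to the displayed formula; it collapses to the same expression with $f'(0)f(0)^{-1}$ (equivalently $\overline{f(0)}^{\,-1}\overline{f'(0)}$) in place of $f(0)^{-1}\overline{f'(0)}$. The paper's own final line makes the identical slip, so your argument is sound up to this shared misprint in the target identity.
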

\begin{proof}
In this proof we denote  the spherical value and the spherical derivative of a slice function $f$ by $v_sf$ and $\partial_s f$ respectively. Let $g=\I(G)\in\SR(\OO)$ be slice-preserving. Using Propositions \ref{pro:H} and \ref{pro:bih}, we get, outside $V(g)$, 
\[\textstyle
\Delta_4(\log|g|^2)=\dbcrf\dcrf(\log|g|^2)=2\,\dbcrf\ddd{}x(\log|g|^2)=-4\,\partial _s\left(\ddd{}x(\log|g|^2)\right).
\] 
Since $g$ is slice-regular, $\overline g$ is anti-regular (i.e.\ in the kernel of $\ddd{}x$) and it holds, by the Leibniz rule for slice product, 
\[\textstyle
\ddd{}x(\log|g|^2)=\I\left(\ddd{}z(\log|G|^2)\right)=\I\left(\frac1{|G|^2}\ddd{}z(|G|^2)\right)=\frac1{|g|^2}\ddd{}x(g\cdot \bar g)
=\frac1{|g|^2}\ddd{g}x\cdot \bar g.
\]
Therefore
\[\textstyle
\Delta_4(\log|g|^2)=-4\,\partial _s\left(\frac1{|g|^2}\ddd{g}x\cdot \bar g\right).
\]
Since $|g|^{-2}$ is circular, from the Leibniz rule \eqref{Leibniz} for spherical value and derivative  we get
\begin{equation}\textstyle\label{pro:f0}
 \Delta_4(\log|g|^2)=-\frac4{|g|^2}\partial _s\left(\ddd{g}x\cdot \bar g\right)=
-\frac4{|g|^2}\left(\partial _s(\ddd{g}x)\cdot v_s{\bar g}+v_s{(\ddd{g}x)}\cdot \partial_s\bar g\right).
\end{equation}
Now we set $g=N(f)$. Firstly we must compute $\partial _s(\ddd{}x (f\cdot f^c))\cdot v_s(\overline{N(f)})$. Since $\ddd{}x (f\cdot f^c)=\ddd{f}x\cdot f^c+f\cdot \ddd {f^c}x$, we compute separately the two terms at $x=0$. The first one is
\[\textstyle
\left(\partial _s\left(\ddd{f}x\cdot f^c\right)\cdot v_s(\overline{N(f)})\right)_{|x=0}=
\left(\ddd{}x\left(\ddd{f}x\cdot f^c\right)\right)_{|x=0}\ |f(0)|^2,
\]
where we used the fact that the spherical derivative extends to $\R$ as the slice derivative. Then
\begin{equation}
\textstyle\label{pro:f1}
\left(\partial _s\left(\ddd{f}x\cdot f^c\right)\cdot v_s(\overline{N(f)})\right)_{|x=0}=
\left(\frac{\partial^2 f}{\partial x^2}(0)\,\overline{f(0)} + \left|\ddd{f}{x}(0)\right|^2\right) |f(0)|^2.
\end{equation}
The second term is
\begin{align}
\textstyle\nonumber\label{pro:f2}
\left(\partial _s\left(f\cdot \ddd {f^c}x\right)\cdot v_s(\overline{N(f)})\right)_{|x=0}&=\textstyle
\left(\ddd{}x\left(f\cdot \ddd {f^c}x\right)\right)_{|x=0} |f(0)|^2\\
&=\textstyle
\left(\left|\ddd{f}{x}(0)\right|^2 + f(0) \overline{\frac{\partial^2 f}{\partial x^2}(0)}\right) |f(0)|^2.
\end{align}
It remains to compute the two terms coming from $v_s\left(\ddd{}x(f\cdot f^c)\right)\cdot \partial_s\overline {N(f)}$.
Since $\partial_s\overline {N(f)}_{|x=0}=-\partial_s {N(f)}_{|x=0}=-\ddd {}x(f\cdot f^c)_{|x=0}$, the first one is equal to
\begin{equation}
\textstyle\label{pro:f3}
\left(v_s{(\ddd{f}x\cdot f^c)}\cdot \partial_s\overline {N(f)}\right)_{|x=0}=-2\ddd fx(0)\overline{f(0)}\,\RE\left(f(0)\overline{\ddd fx(0)}\right),
\end{equation}
whilst the second one is
\begin{equation}
\textstyle\label{pro:f4}
\left(v_s{(f\cdot \ddd{f^c}x)}\cdot \partial_s\overline {N(f)}\right)_{|x=0}=-2f(0)\overline{\ddd fx(0)}\,\RE\left(f(0)\overline{\ddd fx(0)}\right).
\end{equation}
Putting it all together, using  \eqref{pro:f1}, \eqref{pro:f2},  \eqref{pro:f3},  \eqref{pro:f4} in \eqref{pro:f0}, we get 
\begin{align*}
\textstyle
\Delta_4\log|N(f)|_{x=0}&=\frac12\Delta_4(\log|N(f)|^2)_{|x=0}\\
&\textstyle
=-\frac2{|f(0)|^2}\left( 2\RE\left(\overline{f(0)}\frac{\partial^2 f}{\partial x^2}(0)\right) + 2\left|\overline{\ddd {f}{x}(0)}\right|^2\right)\\
&\textstyle
\quad -\frac2{|f(0)|^4}\left(-4\left(\RE\left(f(0)\overline{\ddd {f}{x}(0)}\right)\right)^2 \right)\\
&\textstyle
=-4\RE\left(f(0)^{-1}\frac{\partial^2 f}{\partial x^2}(0)\right)+4\RE\left(\left(f(0)^{-1}\overline{\ddd {f}{x}(0)}\right)^2\right),
\end{align*}
where we used the fact that, for any $a\in\HH$, it holds $-2|a|^2+4\RE(a)^2=2\RE(a^2)$. 
\end{proof}

\begin{remark}
The formula of the previous proposition is still valid when $x=0$ is replaced by any real point $x$ where $f(x)\ne0$.
\end{remark}

Given a nonconstant function $f\in\SR(\OO)$, let $T_f:\OO\setminus V(N(f))\to\OO\setminus V(N(f))$ be the diffeomorphism defined by $T_f(x)=f^c(x)^{-1}x f^c(x)$ (see e.g. \cite[Proposition 5.32]{GSS}). Note that $T_f$ and its inverse $T_{f^c}$ map any sphere $\SS_x$ onto itself. 
Let $S_f:\OO\setminus V(N(f))\to\HH$ be the map  defined by
\[S_f(x)=\begin{cases}
\sd f(x)f(x)^{-1}\overline x f(x) \sd f(x)^{-1}&\text{\quad if }x\not\in \overline{D_f},\\
\overline x&\text{\quad if }x\in \overline{D_f}.
\end{cases}
\]
Observe that if $f$ is slice-preserving, then $S_f(x)=\overline x$ for every $x$.

\begin{proposition}\label{Sf}
Let $f\in\SR(\OO)$ be nonconstant. 
The map $S_f$ is a diffeomorphism of the open set $\OO\setminus (V(N(f))\cup \overline{D_f})$.
\end{proposition}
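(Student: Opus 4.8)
Write $U:=\OO\setminus\bigl(V(N(f))\cup\overline{D_f}\bigr)$. The plan is to show that on $U$ the map $S_f$ is given by its first branch, is smooth, preserves every ``sphere'' $\SS_x$, and admits an explicit smooth inverse; the whole problem then reduces to checking that $S_f$ induces a diffeomorphism of $\SS$ on each such sphere.

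First the set-up. On $U$ one has $f(x)\neq0$ (since $V(f)\subseteq V(N(f))$) and $\sd f(x)\neq0$ (since $D_f\subseteq\overline{D_f}$), so for $x\in U$ the value $S_f(x)=\sd f(x)f(x)^{-1}\overline x f(x)\sd f(x)^{-1}$ is the one prescribed by the definition, and it depends smoothly on $x$ because $f$ is biharmonic and $\sd f$ is harmonic (Proposition~\ref{teo:sr}), hence both of class $\mathcal C^\infty$, and $q\mapsto q^{-1}$ is smooth on $\HH\setminus\{0\}$. Since $S_f(x)=p\,\overline x\,p^{-1}$ with $p:=\sd f(x)f(x)^{-1}\neq0$, and conjugation by a nonzero quaternion is an isometry of $\HH$ fixing $\R$ and stabilising $\IM(\HH)$, we have $S_f(x)\in\SS_x$. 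The sets $\OO$, $V(N(f))=\bigcup_{y\in V(f)}\SS_y$ and $\overline{D_f}$ are all circular (the closure of a circular set is circular, each $x\mapsto\RE x+g\,\IM(x)\,g^{-1}$ with $g\in\HH\setminus\{0\}$ being a homeomorphism of $\HH$ that carries circular sets to circular sets), so $U$ is circular and $x\in U$ forces $\SS_x\subseteq U$; hence $S_f(U)\subseteq U$. The same remarks show that $\Sigma\colon U\to U$, $\Sigma(y):=f(y)\sd f(y)^{-1}\,\overline y\,\sd f(y)f(y)^{-1}$, is well defined and smooth.

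Now fix $x\in U\setminus\R$ and write points of $\SS_x$ as $\alpha+\beta K$ with $\alpha=\RE x$, $\beta=|\IM x|>0$, $K\in\SS$. Since $\vs f$ and $\sd f$ are constant on $\SS_x$, with values $a:=\vs f(x)$ and $b:=\sd f(x)\neq0$, one has $f(\alpha+\beta K)=a+\beta Kb$ and hence $\sd f(\alpha+\beta K)\,f(\alpha+\beta K)^{-1}=(w+\beta K)^{-1}$, where $w:=ab^{-1}$ and $w+\beta K=f(\alpha+\beta K)b^{-1}$ is invertible because $x\notin V(N(f))$ forces $\SS_x\cap V(f)=\varnothing$. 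Consequently $S_f(\alpha+\beta K)=\alpha+\beta\,\phi_x(K)$ and $\Sigma(\alpha+\beta K)=\alpha+\beta\,\psi_x(K)$, where
\[
\phi_x(K):=(w+\beta K)^{-1}(\beta-Kw)=-(w+\beta K)^{-1}K(w+\beta K),\qquad
\psi_x(K):=(\beta-wK)(w+\beta K)^{-1}=-(w+\beta K)K(w+\beta K)^{-1};
\]
being conjugates of $-K$, these lie in $\SS$, so $\phi_x,\psi_x\colon\SS\to\SS$. The relation $wL+\beta KL+Kw=\beta$ reads $(w+\beta K)L=\beta-Kw$ and also $K(w+\beta L)=\beta-wL$, i.e.\ it is affine separately in $L$ and in $K$; since every $w+\beta M$ ($M\in\SS$) is invertible, it is solved by $L=\phi_x(K)$ and by $K=\psi_x(L)$. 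Hence for $K\in\SS$, putting $L:=\phi_x(K)\in\SS$ the relation holds, so $K=\psi_x(L)$; symmetrically $\phi_x(\psi_x(L))=L$. Thus $\phi_x$ and $\psi_x$ are mutually inverse bijections of $\SS$, each the restriction of a rational map smooth near $\SS$, so $\phi_x$ is a diffeomorphism of $\SS$. Combining this over all $x\in U\setminus\R$, and noting that $S_f$ and $\Sigma$ are the identity at real points of $U$ (a real quaternion is central), we obtain $\Sigma\circ S_f=\id_U=S_f\circ\Sigma$; since $S_f$ and $\Sigma$ are smooth, $S_f$ is a diffeomorphism of $U$.

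The one step with real content is the fibrewise bijectivity: on $\SS_x$ the map $S_f$ is a ``twisted'' self-map of the $2$-sphere — the conjugation of $\overline x$ by an element $\sd f\cdot f^{-1}$ that itself varies over $\SS_x$ — it is not an involution, and its invertibility is not visible from the formula. Making the defining relation explicit and observing that it is affine separately in each variable is what resolves this; the only place the hypotheses enter essentially is in guaranteeing that $w+\beta K$ is invertible for every $K\in\SS$, i.e.\ that $f$ — equivalently $N(f)$ — has no zero on $\SS_x$, which is exactly $x\notin V(N(f))$.
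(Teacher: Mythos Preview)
Your proof is correct, and it takes a genuinely different route from the paper's.

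The paper does not work fibrewise. Instead, it invokes the known diffeomorphism $T_f(x)=f^c(x)^{-1}x\,f^c(x)$ (with inverse $T_{f^c}$) cited from \cite[Proposition~5.32]{GSS}: starting from $y=S_f(x)$ and using $\sd f(y)=\sd f(x)$, it rewrites $\sd f(y)^{-1}y\,\sd f(y)=f(x)^{-1}\overline{x}\,f(x)$, takes quaternionic conjugates to recognise $T_{f^c}(x)$, and concludes that $x=T_f\bigl(\overline{\sd f(y)^{-1}y\,\sd f(y)}\bigr)$. The inverse is thus presented as $T_f$ composed with a simple (involutive) conjugation. This is short but leans on the external fact that $T_f$ is a diffeomorphism, and as written it only checks one of the two inverse identities.

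Your argument is self-contained: parametrising $\SS_x$ and reducing to the relation $wL+\beta KL+Kw=\beta$, which is affine in each variable separately, you obtain the explicit inverse $\Sigma(y)=f(y)\,\sd f(y)^{-1}\,\overline{y}\,\sd f(y)\,f(y)^{-1}$ without invoking $T_f$, and you verify both composites. The formula for $\Sigma$ has the pleasant feature of being $S_f$ with the roles of $f$ and $\sd f$ interchanged, and your computation isolates exactly where the hypothesis $\SS_x\cap V(f)=\varnothing$ is used (invertibility of every $w+\beta M$). The price is a slightly longer calculation; the gain is that nothing from \cite{GSS} about $T_f$ is needed, and the bijectivity is established directly rather than via a left inverse plus an implicit appeal to properties of sphere-preserving maps.
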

\begin{proof}
Let $x\in\OO\setminus (V(N(f))\cup \overline{D_f})$ and $y=S_f(x)$. Since $y\in\SS_x$, we have $\sd f(y)=\sd f(x)$ and then 
\[
\sd f(y)^{-1}y\sd f(y)=f(x)^{-1}\overline x f(x).
\] 
Since 
\[
\overline{f(x)}\,x\,\overline{f(x)}^{-1}=n(f(x))f(x)^{-1}x\frac{f(x)}{n(f(x))}=T_{f^c}(x),
\]
 it holds $T_{f^c}(x)=\overline{\sd f(y)^{-1}y\sd f(y)}$. Therefore $x=T_f\left(\overline{\sd f(y)^{-1}y\sd f(y)}\right)$ and the map $y\mapsto T_f\left(\overline{\sd f(y)^{-1}y\sd f(y)}\right)$ is the inverse of $S_f$ on $\OO\setminus (V(N(f))\cup \overline{D_f})$.
\end{proof}

Let $\BB(0,r)$ be an open ball with centre 0 and radius $r$ with closure contained in $\OO$ and let $f\in\SR(\OO)$ be nonconstant. Assume that $f\ne0$ on $\partial\BB(0,r)$. Under this condition, the map $S_f$ applies $\partial\BB(0,r)$ onto itself.
 From what recalled in \S\ref{subs:normal}, the zero set $V(f)\cap\BB(0,r)$ consists of a finite number of isolated real points $r_1,\ldots,r_m$, of isolated spheres $\SS_{x_1},\ldots, \SS_{x_t}$ and isolated nonreal points $a_{t+1},\ldots,a_l$. We choose one spherical zero $a_i$ in every sphere $\SS_{x_i}$, for $i=1,\ldots,t$.

We are now able to state the four-dimensional Jensen formula for slice-regular functions. 

\begin{theorem}\label{teo:Jensen}
Let $\OO$ be an open circular domain in $\HH$. Let $\BB_r=\BB(0,r)$ be an open ball whose closure is contained in $\OO$.
If $f:\OO\to\HH$ is slice-regular and not constant,  $f(0)\ne0$ and $f\ne0$ on $\partial\BB_r$, then it holds:
\begin{align*}
&\log|f(0)|+\frac{r^2}4\RE\left(\left(f(0)^{-1}\overline{\dd {f}{x}(0)}\right)^2\right)-\frac{r^2}4\RE\left(f(0)^{-1}\frac{\partial^2 f}{\partial x^2}(0)\right)=\\&=
\frac1{2|\partial\BB_r|}\int_{\partial\BB_r}\log|f(x)|d\sigma(x)+
\frac1{2|\partial\BB_r|}\int_{\partial\BB_r}\log|f\circ S_f(x)|d\sigma(x)\nonumber \\
&\quad
-\sum_{k=1}^m\left(\log\frac{r}{r_k}+\frac{r_k^4-r^4}{4\,r^2r_k^2}\right) \nonumber
 -\sum_{i=1}^l\left(2\log\frac{r}{|a_i|}+\frac{|a_i|^4-r^4}{4\,r^2 |a_i|^4}\left(t(a_i)^2-2|a_i|^2\right)\right) \nonumber
\end{align*}
where the first sum ranges over the real zeros $r_1,\ldots, r_m$ of $f$ in $\BB_r $ and the second one over the non-real 
zeros $a_1,\ldots,a_l$ of $f$ in $\BB_r$, repeated according to their total multiplicities.  
\end{theorem}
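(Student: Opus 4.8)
The plan is to reduce the four-dimensional Jensen formula to the classical mean-value/Green's-function machinery for the bi-Laplacian $\Delta_4^2$ on the ball $\BB_r\subseteq\R^4\cong\HH$, applied to the function $u(x)=\log|N(f)(x)|$. By Proposition \ref{pro:bih}, $u$ is biharmonic on $\OO\setminus V(N(f))$, and since $N(f)$ is slice-preserving, $u(x)=\log|N(f)(x)|=2\log|f(x)|$ away from $V(f)$. The singularities of $u$ inside $\BB_r$ sit exactly on the zero set of $N(f)$, i.e.\ on $\bigcup_{x\in V(f)}\SS_x$: real points $r_1,\dots,r_m$, and $2$-spheres $\SS_{a_i}$ (which degenerate to points when $a_i$ is real, but here the $a_i$ are nonreal). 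The first step is therefore to write down the biharmonic Green/Poisson representation: for a function biharmonic away from a finite collection of singular sets, its value (and the value of $\Delta_4 u$) at the centre $0$ equals a boundary integral over $\partial\BB_r$ of $u$ (contributing the first integral in the statement, with the factor $\tfrac1{2|\partial\BB_r|}$ coming from $u=2\log|f|$) plus a sum of local contributions, one per singular set, each expressed through the fundamental solutions of $\Delta_4$ and $\Delta_4^2$ on $\R^4$ and their spherical averages on $\partial\BB_r$.

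The second step is to compute these local contributions. For a real zero $r_k$ of $f$ of multiplicity one, $N(f)$ has a simple zero of order $2$ along the real axis locally, so $u$ has a $\log$-type singularity; integrating the biharmonic Poisson kernel against a point mass at $r_k\in\BB_r\subseteq\R^4$ produces exactly the combination $\log(r/r_k)+\tfrac{r_k^4-r^4}{4r^2r_k^2}$ — the first term is the harmonic (Green's function) part in dimension $4$, the second is the biharmonic correction, both of which are standard radial computations. For a nonreal zero $a_i$, the singular set of $u$ is the whole sphere $\SS_{a_i}=\alpha_i+\beta_i\SS$ (with $a_i=\alpha_i+\beta_i J$, so $t(a_i)=2\alpha_i$, $|a_i|^2=\alpha_i^2+\beta_i^2$): $N(f)$ vanishes to order $2$ (with multiplicity) along this $2$-sphere, and the local contribution is the integral of the relevant kernels over $\SS_{a_i}$ with the natural (area) measure. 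Carrying out this integral over the $2$-sphere of radius $\beta_i$ centred at $(\alpha_i,0,0,0)$ yields $2\log(r/|a_i|)+\tfrac{|a_i|^4-r^4}{4r^2|a_i|^4}(t(a_i)^2-2|a_i|^2)$; the factor $2$ and the appearance of $t(a_i)^2-2|a_i|^2=2\alpha_i^2-2\beta_i^2$ are exactly what one gets from averaging $|x-q|^{-2}$ and $\log|x-q|$ over $q\in\SS_{a_i}$ and then over $x\in\partial\BB_r$. The multiplicity bookkeeping comes from $\widetilde m_{N(f)}(y)=2\widetilde m_f(y)$ recalled in \S\ref{subs:normal}, so each zero enters with weight equal to its total multiplicity.

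The third step identifies the ``$\Delta_4 u$ at the centre'' term and the second boundary integral. Applying the representation formula to recover $u(0)$ requires knowing $\Delta_4 u(0)$; Proposition \ref{deltalog} gives precisely
\[
\Delta_4\log|N(f)|_{x=0}=-4\RE\!\left(f(0)^{-1}\tfrac{\partial^2 f}{\partial x^2}(0)\right)+4\RE\!\left(\bigl(f(0)^{-1}\overline{\tfrac{\partial f}{\partial x}(0)}\bigr)^2\right),
\]
and multiplying by the appropriate radial weight $\tfrac{r^2}{?}$ from the biharmonic kernel produces the two derivative terms $\tfrac{r^2}4\RE(\cdots)^2-\tfrac{r^2}4\RE(\cdots)$ on the left-hand side. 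The remaining piece is the second boundary integral $\tfrac1{2|\partial\BB_r|}\int_{\partial\BB_r}\log|f\circ S_f(x)|\,d\sigma(x)$: this arises because the natural object controlled by the mean-value property is not $\log|f|$ alone but the \emph{symmetrized} quantity $\log|N(f)| = \log|f| + \log|f^c|$, and on $\partial\BB_r$ one has $|f^c(x)| = |f(S_f(x))|$ by the very definition of $S_f$ together with the identity $(f\cdot g)(x)=f(x)g(f(x)^{-1}xf(x))$ linking slice and pointwise products — I would verify this pointwise identity on $\partial\BB_r\setminus\overline{D_f}$ first (on $\overline{D_f}$ one uses $S_f(x)=\bar x$ and continuity). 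Since $S_f$ is a diffeomorphism of $\partial\BB_r$ by Proposition \ref{Sf} restricted to the sphere, a change of variables shows $\int_{\partial\BB_r}\log|f^c|\,d\sigma=\int_{\partial\BB_r}\log|f\circ S_f|\,d\sigma$ up to the Jacobian — and here I would need to check that $S_f$ is measure-preserving on $\partial\BB_r$ (it maps each $\SS_x\subseteq\partial\BB_r$ to itself by a rotation, so it preserves the $3$-sphere's volume form), which is the one slightly delicate bookkeeping point.

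\textbf{Main obstacle.} The conceptual heart is clean, but the hardest part will be the explicit computation of the biharmonic Green's-function contributions over a nonreal zero's $2$-sphere $\SS_{a_i}$ and the verification that they collapse to the stated closed form involving $t(a_i)$ and $|a_i|$ — equivalently, pinning down the correct biharmonic Poisson kernel for $\Delta_4^2$ on $\BB_r\subseteq\R^4$ (both the boundary part and the point-source part, with the right normalization constants) and integrating it against surface measure on an off-centre $2$-sphere. A secondary technical nuisance is justifying the representation formula in the presence of the singularities: one should excise small tubular neighbourhoods of the $r_k$ and $\SS_{a_i}$, apply Green's identities for $\Delta_4^2$ on the perforated domain, and control the boundary terms on the shrinking tubes — the $\log$ and $|x-q|^{-2}$ singularities are integrable against the surface measures in $\R^4$, so the limits exist, but this requires care, especially near $\overline{D_f}$ where $S_f$ changes definition.
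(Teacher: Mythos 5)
There is a genuine gap in the reduction step, before any biharmonic analysis starts. Your step-1 identity $\log|N(f)(x)|=2\log|f(x)|$ away from $V(f)$ is false for general (non slice-preserving) slice-regular $f$: for $f(x)=x-i$ one has $N(f)(x)=x^2+1$, so $N(f)(j)=0$ while $|f(j)|^2=2$ and $j\notin V(f)$. The step-3 substitutes are also false as stated: $\log|N(f)|=\log|f|+\log|f^c|$ does not hold pointwise, and $|f^c(x)|=|f(S_f(x))|$ fails in the same example ($|f^c(j)|=\sqrt2$, while $S_f(j)=i$ and $f(i)=0$). The identity actually needed is the pointwise one $|N(f)(x)|=|f(x)|\,|f(S_f(x))|$ on $\partial\BB_r$; it follows from $N(f)(x)=f(x)\,f^c\bigl(f(x)^{-1}xf(x)\bigr)$, but one must then prove $f^c(T_{f^c}(x))=\overline{f(S_f(x))}$, and this is not ``the very definition of $S_f$'': writing $x=\alpha+J\beta$, $z=\alpha+\ui\beta$, $J'=f(x)^{-1}Jf(x)$, one checks $f^c(T_{f^c}(x))=\overline{F_1(z)}+J'\overline{F_2(z)}=\overline{F_1(z)+KF_2(z)}$ with $K=-\sd f(x)J'\sd f(x)^{-1}$, which is exactly why $S_f$ carries the conjugation by $\sd f(x)$. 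Once this pointwise identity is in place (plus the separate, easy case $x\in\overline{D_f}$, where $f^c(x)=\overline{f(x)}$ and $S_f(x)=\bar x$), the two boundary integrals come out by direct integration of $\log|N(f)|=\log|f|+\log|f\circ S_f|$; your change-of-variables detour via $\int\log|f^c|$ is both unnecessary and unsupported, since $S_f$ restricted to a sphere $\SS_x$ is not a rigid rotation (the conjugating quaternion $f(y)$ varies with $y\in\SS_x$), so measure preservation cannot be taken for granted.

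The second gap is that the entire biharmonic Green/Poisson machinery --- the correct kernels and normalizations for $\Delta_4^2$ on $\BB_r$, the excision of tubular neighbourhoods of the singular sets, and above all the explicit evaluation of the contribution of an off-centre zero $2$-sphere yielding $2\log(r/|a_i|)+\frac{|a_i|^4-r^4}{4r^2|a_i|^4}\left(t(a_i)^2-2|a_i|^2\right)$ --- is left as ``standard'' and acknowledged as your main obstacle. These computations are precisely the content of the Jensen formula for slice-preserving regular functions of Altavilla and Bisi (\cite[Theorem 3.3]{AltavillaBisi}), which the paper quotes instead of reproving: applying it to the slice-preserving function $N(f)$ and combining it with the boundary identity above, with $N(f)(0)=|f(0)|^2$, with Proposition \ref{deltalog} for the left-hand side, and with $\widetilde m_{N(f)}(y)=2\widetilde m_f(y)$ for the multiplicity bookkeeping finishes the proof. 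So either carry out your kernel computations in full (essentially reproving that theorem) or invoke it directly; as written, the proposal asserts rather than proves the hardest part.
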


\begin{proof}
Let $x=\alpha+J\beta\in\partial\BB_r\setminus\overline{D_f}$ and let $z=\alpha+\ui\beta$. Since $f(x)\ne0$,  $N(f)(x)=f(x)f^c(T_{f^c}(x))$. Moreover, if $f=\I(F)=\I(F_1+\ui F_2)$ and $J'=f(x)^{-1}Jf(x)$, then $T_{f^c}(x)=f(x)^{-1}xf(x)=\alpha+J'\beta$ and 
\[f^c(T_{f^c}(x))=\overline{F_1(z)}+J'\overline{F_2(z)}=\overline{F_1(z)-F_2(z)J'}=\overline{F_1(z)+KF_2(z)},
\]
where $K=-F_2(z)J'F_2(z)^{-1}=-\sd f(x)J'\sd f(x)^{-1}\in\SS$. Therefore $f^c(T_{f^c}(x))=\overline{f(S_f(x))}$ and then
\begin{equation*}\label{eq:lognf0}
\log|N(f)(x)|=\log|f(x)|+\log|f(S_f(x))|\text{\quad on }\partial\BB_r\setminus\overline{D_f}.
\end{equation*}
On the other hand, if $x\in\partial\BB_r\cap\overline{D_f}$ then $N(f)(x)=f(x)f^c(x)=f(x)\overline{f(x)}$ and $|\overline{f(x)}|=|f(x)|=|f(\bar x)|$. Therefore \begin{equation}\label{eq:lognf}
\log|N(f)(x)|=\log|f(x)|+\log|f(S_f(x))|\text{\quad on }\partial\BB_r.
\end{equation}
The Jensen formula for $f$ follows now from the formula proved in \cite[Theorem~3.3]{AltavillaBisi} applied to the slice-preserving regular function $N(f)$, using equation \eqref{eq:lognf}, the formula $N(f)(0)=|f(0)|^2$, Proposition \ref{deltalog} and the fact that the total multiplicities of zeros of $f$ are one half the total multiplicities of them as zeros of $N(f)$.
\end{proof}

\begin{remark}
If $f$ has no zeros in $\overline{\BB}_r$, the previous Jensen formula is a consequence of the mean value formula for biharmonic functions applied to $\log|N(f)|$. In this case the last two sums in the formula are missing.
\end{remark}

The Jensen formula can be extended to  
\emph{semiregular functions}, the analogues of meromorphic functions in the quaternionic setting (see \cite[\S 5]{GSS} and \cite{Singularities} for definitions and properties of these functions). In the slice-preserving case, Jensen formula formula for semiregular functions was proved in \cite{AltavillaBisi}.

Let $\OO$ be an open circular domain in $\HH$ and let $\BB_r=\BB(0,r)$ be an open ball whose closure is contained in $\OO$.
Let $f:\OO\to\HH$ be semiregular. We denote by $r_1,\ldots r_m$ the real zeros of $f$ in $\overline{\BB}_r$, by $a_1,\ldots, a_l$  the non-real zeros of $f$ in $\overline{\BB}_r$ (as above, in case of spherical zeros we choose one spherical zero in every sphere), repeated according to their total multiplicities. 

The poles of $f$ can be real or spherical. In the latter case, if $\SS_x$ is a spherical pole, the \emph{order} $\ord_f(y)$ of the points $y\in\SS_y$ are all equal, except possibly for one point of lesser order (see \cite[Theorem 5.28]{GSS} and \cite[Theorem 9.4]{Singularities}). 
We denote by $p_1,\ldots, p_n$ the real poles of $f$ in $\overline{\BB}_r$, repeated according to their order. 
Let $\SS_{y_1},\ldots,\SS_{y_p}$ be the spherical poles of $f$ in $\overline{\BB}_r$ of the first type, having the property that all points in $\SS_{y_i}$ have the same order. Let $\SS_{z_1},\ldots,\SS_{z_q}$ be the spherical poles of $f$ in $\overline{\BB}_r$ of the second type, with the points $z_j\in\SS_{z_j}$ chosen such that $\ord_f(z_j)<\max_{z\in\SS_{z_j}}\ord_f(z)$. Let $i_f(z_j)>0$ denote the \emph{isolated multiplicity} of $f$ at $z_j$ for $j=1,\ldots,q$, as in \cite[Definition 3.12]{Stoppato2012}. 
Set \[s_1=\frac12\sum_{i=1}^p\ord_f(\SS_{y_i}),\quad s_2=\frac12\sum_{j=1}^q\ord_f(\SS_{z_j}),\quad s=s_1+s_2,\]
where $\ord_f(\SS_x)$ is the $\emph{spherical order}$ of $f$ at $\SS_x$ (which is two times the maximal order of the points of the sphere \cite[Theorem 9.4]{Singularities}). 
It holds $i_f(z_j)\ge\frac12\ord_f(\SS_{z_j})-\ord_f(z_j)>0$ for every $j=1,\ldots,q$ (\cite[Proposition 5.31]{GSS}).

Choose points $b_1,\ldots,b_{s_1}\in\cup_{i=1}^p\SS_{y_i}$ and $b_{s_1+1},\ldots,b_s\in\cup_{j=1}^q\SS_{z_j}$ (one point in each sphere, repeated according to one-half the spherical order of the pole).
Let $a_{l+1},\ldots,a_{l+q'}$ denote the points $z_1,\ldots,z_q$, repeated according to their isolated multiplicities ($q'=\sum_{j=1}^q i_f(z_j)$).

With these notations, we can state the Jensen formula for semiregular functions.

\begin{theorem}\label{teo:Jensensemi}
Let $\OO$ be an open circular domain in $\HH$ and let $\BB_r=\BB(0,r)$ be an open ball whose closure is contained in $\OO$.
Let $f:\OO\to\HH$ be semiregular and not constant. %, such that on every spherical pole of $f$ all points have the same order. 
Assume that $0$ is not a pole nor a zero of $f$ and $\partial\BB_r$ does not contain zeros or poles of $f$. Then it holds:
\begin{align*}
&\log|f(0)|+\frac{r^2}4\RE\left(\left(f(0)^{-1}\overline{\dd {f}{x}(0)}\right)^2\right)-\frac{r^2}4\RE\left(f(0)^{-1}\frac{\partial^2 f}{\partial x^2}(0)\right)=
\\
&=
\frac1{2|\partial\BB_r|}\int_{\partial\BB_r}\log|f(x)|d\sigma(x)+
\frac1{2|\partial\BB_r|}\int_{\partial\BB_r}\log|f\circ S_f(x)|d\sigma(x)\nonumber
\\
&\quad
-\sum_{k=1}^m\left(\log\frac{r}{r_k}+\frac{r_k^4-r^4}{4\,r^2r_k^2}\right) \nonumber
 -\sum_{i=1}^{l+q'}\left(2\log\frac{r}{|a_i|}+\frac{|a_i|^4-r^4}{4\,r^2 |a_i|^4}\left(t(a_i)^2-2|a_i|^2\right)\right)\\ 
 \nonumber
&\quad
 +\sum_{k=1}^n\left(\log\frac{r}{p_k}+\frac{p_k^4-r^4}{4\,r^2p_k^2}\right) \nonumber
 +\sum_{i=1}^{s}\left(2\log\frac{r}{|b_i|}+\frac{|b_i|^4-r^4}{4\,r^2 |b_i|^4}\left(t(b_i)^2-2|b_i|^2\right)\right). \nonumber
\end{align*}
\end{theorem}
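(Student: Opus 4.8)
The plan is to derive the formula, exactly as in the proof of Theorem~\ref{teo:Jensen}, from the Jensen formula for slice-preserving \emph{semiregular} functions proved in \cite{AltavillaBisi}, applied now to the normal function $g=N(f)=f\cdot f^c$. Since $f$ and $f^c$ are semiregular, $g$ is a slice-preserving semiregular function on $\OO$; moreover $V(N(f))=\bigcup_{y\in V(f)}\SS_y$ and the poles of $N(f)$ lie over the poles of $f$, so by hypothesis $g(0)=|f(0)|^2\ne0$ and $g$ has neither zeros nor poles on $\partial\BB_r$. Hence the slice-preserving semiregular Jensen formula applies to $g$ on $\BB_r$, and the task is to rewrite each of its terms in terms of $f$.

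First I would observe that the left-hand side and the two boundary integrals are obtained verbatim as in Theorem~\ref{teo:Jensen}. Since $0$ is neither a zero nor a pole of $f$, the function $f$ is slice-regular on a neighbourhood of $0$, so the slice derivatives $\ddd{f}{x}(0)$ and $\frac{\partial^2 f}{\partial x^2}(0)$ are defined and Proposition~\ref{deltalog} computes $\Delta_4\log|N(f)|_{x=0}$; together with $\log|N(f)(0)|=2\log|f(0)|$ and the biharmonic mean value correction in $\R^4$, this turns the left-hand side of the $g$-formula into the left-hand side of the statement. On $\partial\BB_r$ the pointwise identity $\log|N(f)(x)|=\log|f(x)|+\log|f(S_f(x))|$ (equation~\eqref{eq:lognf}) holds by the same computation as in the proof of Theorem~\ref{teo:Jensen}: it only uses that $f(x)\ne0$ on $\partial\BB_r$, together with the facts that $S_f(x)=\overline x$ and $|f(\overline x)|=|f(x)|$ on $\partial\BB_r\cap\overline{D_f}$. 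Integrating this identity over $\partial\BB_r$ turns the boundary mean of $\log|N(f)|$ appearing in the $g$-formula into the two boundary integrals of the statement.

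The heart of the proof is the bookkeeping of the zeros and poles of $g=N(f)$ in terms of those of $f$, which is where I expect the main difficulty to lie. For the zeros this is essentially the relation $\widetilde m_{N(f)}(y)=2\widetilde m_f(y)$ already recalled in \S\ref{subs:normal}: the real zeros of $N(f)$ in $\overline{\BB}_r$ are the points $r_1,\dots,r_m$ and its spherical zeros are the spheres $\SS_{a_i}$ ($i=1,\dots,l$), so after translation (and a compensating factor inherited from $\log|f|=\tfrac12\log|N(f)|$ at real points) the zero sum of the $g$-formula becomes the two sums over $r_1,\dots,r_m$ and $a_1,\dots,a_l$ in the statement. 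For the poles one uses that a real pole of $f$ of order $d$ gives a real pole of $N(f)$ of order $2d$ and that $\ord_{N(f)}(\SS_x)=2\,\ord_f(\SS_x)$ for spherical poles; this accounts for the sums over $p_1,\dots,p_n$ and over $b_1,\dots,b_{s_1}$, the latter coming from the first-type spherical poles $\SS_{y_i}$. The delicate case is a spherical pole $\SS_{z_j}$ of the second type: here one has to invoke the local structure of semiregular functions (\cite[\S5]{GSS}, \cite{Singularities}, \cite{Stoppato2012}) to show that the conjugation in $f^c$ moves the exceptional point of the pole, so that in forming $N(f)=f\cdot f^c$ the two Laurent expansions over $\SS_{z_j}$ do not cancel at $z_j$: the net effect is that $N(f)$ carries over $\SS_{z_j}$ a spherical pole of order $2\,\ord_f(\SS_{z_j})$ together with an extra zero at $z_j$, whose multiplicity is exactly the isolated multiplicity $i_f(z_j)$ of \cite[Definition~3.12]{Stoppato2012}. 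This is why in the statement the sphere $\SS_{z_j}$ contributes to the pole sum (through the points $b_{s_1+1},\dots,b_s$) while the exceptional point $z_j$ also contributes to the zero sum (through the points $a_{l+1},\dots,a_{l+q'}$). Putting all the pieces together, and using that each summand depends on a zero or pole sphere only through $|w|$ and $t(w)$---which are constant on that sphere---one recovers the four sums of the statement. The main obstacle is precisely this last identification (the contribution of the second-type poles and the exact matching of the several multiplicity conventions and normalizing constants that arise in passing from $g=N(f)$ back to $f$); the remainder of the argument runs parallel to the slice-regular case treated in Theorem~\ref{teo:Jensen}.
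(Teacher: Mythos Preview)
Your strategy is genuinely different from the paper's. You propose to apply the slice-preserving \emph{semiregular} Jensen formula of \cite{AltavillaBisi} directly to $N(f)$ and then translate each term back to $f$, mirroring the proof of Theorem~\ref{teo:Jensen}. The paper instead multiplies $f$ on the left by a slice-preserving function $g$ built from reciprocal Blaschke factors $g^{(1)}_k,\,g^{(2)}_i$, chosen so that $|g|=1$ on $\partial\BB_r$ and so that $h=gf$ extends to a \emph{slice-regular} function on a neighbourhood of $\overline{\BB}_r$; it then applies the already-proved Theorem~\ref{teo:Jensen} to $h$ and reads off the pole contributions from the explicit Blaschke data for $g$.

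The gap in your route is the treatment of second-type spherical poles. Since $N(f)$ is slice-preserving, it cannot carry ``a spherical pole of order $2\,\ord_f(\SS_{z_j})$ together with an extra zero at $z_j$'': a slice-preserving semiregular function has only real or full spherical zeros and poles, never an isolated zero sitting on a pole sphere. In fact the relation $\ord_{N(f)}(\SS_x)=2\,\ord_f(\SS_x)$ already fails here: for $f(x)=(x^2+1)^{-1}(x+i)$ one has $\ord_f(\SS_i)=2$, but $N(f)=(x^2+1)^{-1}$ has $\ord_{N(f)}(\SS_i)=2$, not $4$. Thus the AltavillaBisi formula applied to $N(f)$ produces a \emph{single} pole term at $\SS_{z_j}$ with the net order of $N(f)$, and after halving you do not recover the two separate contributions in the statement (a pole term with weight $\tfrac12\ord_f(\SS_{z_j})$ and an extra zero term with weight $i_f(z_j)$). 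The paper's construction sidesteps this: because $g$ vanishes on each pole sphere to order exactly $\tfrac12\ord_f(\SS_{z_j})$, the product $h=gf$ is regular and acquires a \emph{genuine} isolated zero at $z_j$ of total multiplicity $i_f(z_j)$; the pole sum then comes from the Blaschke data of $g$ (via the identities for $|h(0)|$ and $\Delta_4\log|N(h)|_{|x=0}$), while the $a_{l+1},\dots,a_{l+q'}$ terms come from the zero sum for $h$ in Theorem~\ref{teo:Jensen}. That separation of sources is exactly what your $N(f)$-based bookkeeping cannot reproduce as stated.
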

\begin{proof}
The proof is based on the fact that one can find a slice-preserving regular function $g$ on an open neighbourhood $\OO'$ of $\overline\BB_r$ such that $gf$ has a slice-regular extension $h$ on $\OO'$. For every real pole $p_k\in\BB_r$, let
\[g_{k}^{(1)}(x)=-(x-r^2 p_k^{-1})^{-1}(x-p_k)rp_k^{-1}
\]
and for every spherical pole $b_i$, let
\[g^{(2)}_i(x)=
\Delta_{r^2b_i^{-1}}(x)^{-1}\Delta_{b_i}(x)r^2|b_i|^{-2}.
\]
Observe that $g_k^{(1)}$ is the reciprocal of the slice-preserving quaternionic $r$-Blaschke factor $B_{p_k,r}$
and $g_i^{(2)}(x)$ is the reciprocal of the normal function $N(B_{b_i,r})$ (see e.g.\ \cite{AltavillaBisi} for definition and properties of quaternionic $r$-Blaschke factors $B_{a,r}$). We can set 
\[g=g^{(1)}_1\cdots g^{(1)}_n g^{(2)}_1\cdots g^{(2)}_s.
\]
Then $g$ is a slice-preserving regular function on a neighbourhood $\OO'$ of $\overline{\BB}_r$, such that $|g|=1$ on $\partial\BB_r$,  
having zero set $V(g)=\{p_1,\ldots,p_n\}\cup\{\SS_{b_1},\ldots,\SS_{b_s}\}$ (with multiplicities equal to the orders of the poles for $f$). We can assume that all the zeros and poles of $f$ stay in $\OO'$. Then $gf$ extends regularly to a function $h\in\SR(\OO')$. 

If the order of $f$ is constant on every spherical pole ($q=q'=0$), then $V(h)\cap\BB_r=V(f)\cap\BB_r$ with equal total multiplicities. The Jensen formula for $f$ now follows from the formula for $h$, using the following facts:
\begin{enumerate}
	\item 
	$|h(0)|=|f(0)||g(0)|=|f(0)|\prod_{k=1}^n\frac{|p_k|}r\prod_{i=1}^s\frac{|b_i|^2}{r^2}$.
	\item
	On $\partial\BB_r$, since $|g|=1$ and $g$ is slice-preserving, it holds 	$|h|=|f|$ and
	$\log|N(h)|=2\log|g|+\log|N(f)|=\log|N(f)|$.
	\item\label{pt4}
	From \cite[Lemma 3.1]{AltavillaBisi} (or from Proposition \ref{deltalog}), it follows that
	\begin{align*}
	\Delta_4&\log|N(h)|_{x=0}=\Delta_4\log|N(f)|_{x=0}+\Delta_4\log|N(g)|_{x=0}=\\
	&=\Delta_4\log|N(f)|_{x=0}-2\sum_{k=1}^n\frac{p_k^4-r^4}{r^4p_k^2}-2\sum_{i=1}^s\frac{|b_i|^4-r^4}{r^4 |b_i|^4}\left(t(b_i)^2-2|b_i|^2\right).
	\end{align*}
\end{enumerate}

If the order of $f$ varies on some spherical pole $\SS_{b_j}$, 
then $V(h)$ vanishes also at the points $z_1,\ldots, z_q$, with total multiplicities equal to the isolated multiplicities $i_f(z_j)$.
The Jensen formula for $f$ follows from the formula for $h$, using properties 1, 2, 3 above and the equality
\begin{align*}
& \sum_{j=1}^q\left(2\log\frac{r}{|z_j|}+\frac{|z_j|^4-r^4}{4\,r^2 |z_j|^4}\left(t(z_j)^2-2|z_j|^2\right)\right)i_f(z_j)\\
& \quad=\sum_{i=l+1}^{l+q'}\left(2\log\frac{r}{|a_i|}+\frac{|a_i|^4-r^4}{4\,r^2 |a_i|^4}\left(t(a_i)^2-2|a_i|^2\right)\right).
\end{align*}
\end{proof}

\begin{remark}
An example of semiregular function that has a spherical pole where the order is not constant, is given
by $f(x)=(x^2+1)^{-1}(x+i)$. It has no zeros and one spherical pole at $\SS=\SS_i$, whose points have all order 1, except for $x=-i$, that has order 0 and isolated multiplicity 1. One obtains the Jensen formula for $f$ on $\BB_r$ ($r>1)$ by multiplying $f$ on the left by the slice-preserving function
\[g=g^{(2)}_1=r^2(x^2+r^4)^{-1}(x^2+1).
\]
and applying Theorem \ref{teo:Jensen} to the product $h=gf=r^2(x^2+r^4)^{-1}(x+i)$, which is slice-regular on $\HH\setminus\SS_{r^2i}$, a neighbourhood of $\BB_r$.
This example shows that the contribution to the formula of spherical poles with nonconstant order can cancel out. 
This happens  when  $i_f(z_j)=\frac12\ord_f(\SS_{z_j})$ for every $j=1,\ldots,q$. 
\end{remark}

%-----------------------------------------------
% Authors (name, address, e-mail)
%-----------------------------------------------
\bigskip
\bigskip
\begin{minipage}[t]{10cm}
\begin{flushleft}
\small{
\textsc{Alessandro Perotti}
% \address{Department of Mathematics\\
%     University of Trento\\
%     Via Sommarive 14\\
%     I-38123 Trento\\
%     Italy}
% \email{alessandro.perotti@unitn.it}
\\*Department of Mathematics
\\*University of Trento
\\*Via Sommarive 14
\\* Trento, I-38123, Italy
\\*e-mail: alessandro.perotti@unitn.it
}
\end{flushleft}
\end{minipage}

%---------------------------------------------------- THE END

\begin{thebibliography}{99}

\bibitem{AltavillaBisi}
{\sc A.~Altavilla, C.~Bisi},
{\it Log-biharmonicity and a Jensen formula in the space of quaternions},
arXiv:1708.04894, 
to appear in Ann.\ Acad.\ Scient.\ Fenn.\ Math.\ {\bf 44} (2019).


\bibitem{EntireSRF}
{\sc F.~Colombo}, {\sc I.~Sabadini} and {\sc D.C.~Struppa}.
{\it Entire slice regular functions}, SpringerBriefs in Mathematics, Springer, Cham 2016.

\bibitem{F}
{\sc R.~Fueter},
{\it Die {F}unktionentheorie der {D}ifferentialgleichungen {$\Delta
              u=0$} und {$\Delta\Delta u=0$} mit vier reellen {V}ariablen}, Comment.\ Math. Helv. {\bf 7}, No.~1 (1934), 307--330.

\bibitem{GSS}
{\sc G.~Gentili}, {\sc C.~Stoppato} and {\sc D.~C. Struppa}, {\it Regular Functions of a Quaternionic  Variable}, Springer Monographs in Mathematics, Springer, Berlin-Heidelberg 2013.

\bibitem{GS}
{\sc G.\ Gentili} and {\sc D.~C.\ Struppa},  {\it A new theory of regular functions of a quaternionic variable}, Adv.\
 Math.\ {\bf 216} (2007), 279--301.

\bibitem{GhPe_AIM}
{\sc R.\ Ghiloni} and {\sc A.\ Perotti}, {\it Slice regular functions on real alternative algebras}, Adv.\ Math.\ {\bf 226} (2011), 1662--1691.

\bibitem{GhPe_Trends}
{\sc R.\ Ghiloni} and {\sc A.\ Perotti}, {\it A new approach to slice regularity on real algebras}, 
Hypercomplex analysis and its Applications, 109--123, Trends Math., Birkh\"auser, Basel 2011.

\bibitem{AlgebraSliceFunctions}
{\sc R.\ Ghiloni}, {\sc A.\ Perotti} and {\sc C.\ Stoppato}, {\it The algebra of slice functions}, Trans.\ Amer.\ Math.\ Soc.
{\bf 369}, No.\ 5 (2017), 4725--4762.

\bibitem{Singularities}
{\sc R.\ Ghiloni}, {\sc A.\ Perotti} and {\sc C.\ Stoppato}, {\it Singularities of slice regular functions over real alternative *-algebras}, Adv.\ Math. {\bf 305} (2017), 1085--1130.

\bibitem{Pe2018}
{\sc A.~Perotti}, {\it {S}lice regularity and harmonicity on {C}lifford algebras}, 
Topics in Clifford Analysis -- A Special Volume in Honor of Wolfgang Spr\"o\ss ig, Trends Math. Birkh\"auser, Basel, 2019
(arXiv:1801.03045).

\bibitem{Stoppato2012}
{\sc C.~Stoppato}, {\it Singularities of slice regular functions}, Math.\ Nachr.\ {\bf 285}(10) (2012), 1274--1293.
\end{thebibliography}
\end{document}